  \newcommand{\Ex}{{\mathbb E}}
 \renewcommand{\Pr}{{\mathbb{P}}}
\newcommand{\bx}{\mathbf{x}}
\newcommand{\by}{\mathbf{y}}
\newcommand{\bs}{\mathbf{s}}
\newcommand{\Reals}{\mathbb{R}}
 \newcommand{\sfrac}[2]{{\textstyle\frac{#1}{#2}}}
\newcommand{\XX}{\mathcal{X}}
\newcommand{\eps}{\varepsilon}
\newtheorem{Lemma}{Lemma}
 \newtheorem{Theorem}[Lemma]{Theorem}
 \newtheorem{Conjecture}[Lemma]{Conjecture}
 \title{To stay discovered: On tournament mean score sequences and the Bradley--Terry model}
 \author{David J. Aldous\thanks{Research supported by NSF Grant DMS-1504802.}     \qquad  Brett Kolesnik\thanks{Research supported by
 an NSERC Postdoctoral Fellowship.} \\
Department of Statistics\\
367 Evans Hall \#\  3860\\
U.C. Berkeley CA 94720\\
aldous@stat.berkeley.edu\\bkolesnik@berkeley.edu}
\begin{document}
\maketitle

{\em In memory of Larry Shepp, 1936 - 2013}

\begin{abstract}
On being told that a piece of work he thought was his discovery had duplicated an earlier mathematician's work, Larry Shepp once replied 
``Yes, but when {\em I} discovered it, it {\em stayed} discovered".
In this spirit we give discussion and probabilistic proofs of two related known results 
(Moon 1963, Joe 1988) on random tournaments which seem surprisingly unknown to modern probabilists. 
In particular our proof of Moon's theorem on mean score sequences seems more constructive than previous proofs.
This provides a comparatively concrete introduction to a longstanding mystery, 
the lack of a canonical construction for a joint distribution in the representation theorem for convex order.

\end{abstract}

\section{Introduction}
\subsection{An analogy}
\label{sec:analogy}
As an analogy for the two theorems to be discussed, we recall some very well known facts.  
First note\\
(a) A probability measure $\mu = \mathrm{dist}(X_1,\ldots,X_n)$ on $\Reals^n$ with each $\Ex X_i = 0$ and $\Ex X_i^2 < \infty$ has a covariance matrix 
$\Gamma_{ij} = \Ex X_iX_j <  \infty$. \\
(b) If $\Gamma$ is a symmetric $n \times n$ matrix such that
$f(\bx) = f(x_1,\ldots,x_n) \propto \exp (- \bx^T \Gamma^{-1} \bx/2)$ 
is a well-defined probability distribution, then its covariance matrix is $\Gamma$.\\
What is true, but not obvious {\em a priori}, is that 
essentially\footnote{We need {\em essentially} to handle the degenerate case: precisely, the set of matrices defined by (a) is the closure  
of the set defined by (b).}
 {\em every} covariance matrix (as defined in (a)) can be used to define the Gaussian 
distribution in (b).
This is the Fact  1 for our analogy.  
The reader will likely perceive this fact as a consequence of the multivariate CLT, although there is an arguably conceptually simpler proof (see section~\ref{sec:M-E}). 
Fact 2 is the familiar characterization of covariance matrices as
symmetric positive semi-definite matrices.

Our Theorems \ref{T1} and \ref{T2} may be regarded as analogs of Facts 1 and 2 in a different context.

\subsection{Statements of the two theorems}
In everyday language {\em tournament} usually means a single-elimination tournament.  
In graph theory an $n$-team  {\em tournament} means the set of win-lose results of league play in which each pair of teams plays once.
So there are exactly $2^{n \choose 2}$ tournaments.
Now consider a completely arbitrary probability distribution $\mu$ on the set of $2^{n \choose 2}$ tournaments.
For $1 \le i \le n$
write $x_i$ for the expectation of the number of wins by team $i$.
Call such a sequence $\bx = (x_i, 1 \le i \le n)$ a {\em mean score sequence}.
In this context, an analog (as mathematically tractable, for instance)  of the Gaussian distribution is the following
{\em Bradley--Terry} model.
Take real parameters $(\lambda_i, 1 \le i \le n)$ and let match results be independent with
\begin{equation}
p_{ij} :=\Pr(i \mbox{ beats } j ) = L(\lambda_i - \lambda_j)
\label{BT-def}
\end{equation}
for the logistic function
\[ L(u) := \frac{e^u}{1+ e^u}, \ -\infty < u < \infty . \]
The set $\XX_n^{B-T}$ of mean score sequences arising from the Bradley--Terry model must be a subset of the set $\XX_n$ of all 
mean score sequences.
But it turns out that, as suggested by the Gaussian analogy, we get essentially {\em all} mean score sequences from a Bradley--Terry model.

\begin{Theorem}[Joe \cite{joe88}]
\label{T1}
The closure in $\Reals^n$ of the set $\XX_n^{B-T}$ is $\XX_n$.
\end{Theorem}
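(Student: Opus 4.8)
The plan is to recognize the Bradley--Terry family as a (non-minimal) exponential family whose mean map is the gradient of a convex log-partition function, to show that this gradient map surjects onto the relative interior of $\XX_n$, and then to take closures. First I would rewrite the likelihood. Writing the logistic weights as $L(\lambda_i-\lambda_j)=e^{\lambda_i}/(e^{\lambda_i}+e^{\lambda_j})$ and letting $\bs=\bs(T)=(s_i)$ be the score vector of a tournament $T$ (so $s_i$ is the number of wins of team $i$), a short computation shows
\[
\Pr_\lambda(T)=\frac{e^{\langle\lambda,\bs\rangle}}{Z(\lambda)},\qquad Z(\lambda)=\prod_{i<j}\bigl(e^{\lambda_i}+e^{\lambda_j}\bigr).
\]
Thus the sufficient statistic is $\bs$, and differentiating gives the mean score sequence $x_i=\partial_{\lambda_i}\log Z(\lambda)=\sum_{j\ne i}L(\lambda_i-\lambda_j)$, so $\XX_n^{B-T}$ is exactly the image of $\nabla\log Z\colon\Reals^n\to\Reals^n$. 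Two structural facts will be used throughout: the map is invariant under $\lambda\mapsto\lambda+c\mathbf 1$, so I work on the subspace $\{\sum_i\lambda_i=0\}$; and, by linearity of expectation, $\XX_n=\mathrm{conv}\{\bs(T):T\}$ is a polytope lying in the hyperplane $\{\sum_i x_i=\binom{n}{2}\}$.

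The heart of the argument is to hit a prescribed interior target. Given $\bx^\star\in\mathrm{relint}(\XX_n)$, I would minimize the convex function
\[
F(\lambda)=\log Z(\lambda)-\langle\bx^\star,\lambda\rangle
\]
over $\{\sum_i\lambda_i=0\}$. At any critical point $\lambda^\star$ one has $\nabla\log Z(\lambda^\star)=\bx^\star$: the gradient condition only forces $\nabla\log Z(\lambda^\star)-\bx^\star$ to be a multiple of $\mathbf 1$, but comparing coordinate sums (both equal to $\binom{n}{2}$) kills that multiple. Hence $\bx^\star\in\XX_n^{B-T}$ as soon as a minimizer exists. (One can note that the Hessian of $\log Z$ is the covariance matrix of $\bs$, positive definite on the quotient, which gives strict convexity and a probabilistic reading of the step, though only existence is needed here.)

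Existence comes from coercivity, and this is where the geometry enters. Along a direction $v\ne 0$ with $\sum_i v_i=0$ one has $\log(e^{tv_i}+e^{tv_j})\sim t\max(v_i,v_j)$, so $F(tv)/t\to h(v)-\langle\bx^\star,v\rangle$, where
\[
h(v)=\max_T\,\langle\bs(T),v\rangle=\sum_{i<j}\max(v_i,v_j)
\]
is the support function of $\XX_n$ (the maximum is attained by letting the higher-$v$ team win each game). The recession slope $h(v)-\langle\bx^\star,v\rangle$ is strictly positive for every such $v$ precisely because $\bx^\star$ lies strictly inside every supporting halfspace, i.e.\ in the relative interior. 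Thus $F$ is coercive on the quotient, a minimizer exists, and $\mathrm{relint}(\XX_n)\subseteq\XX_n^{B-T}$.

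Finally I would assemble the pieces. Every Bradley--Terry mean score sequence is a mean score sequence, so $\XX_n^{B-T}\subseteq\XX_n$, and compactness of $\XX_n$ gives $\overline{\XX_n^{B-T}}\subseteq\XX_n$. Conversely, taking closures in $\mathrm{relint}(\XX_n)\subseteq\XX_n^{B-T}$ and using that a polytope equals the closure of its relative interior yields $\XX_n=\overline{\mathrm{relint}(\XX_n)}\subseteq\overline{\XX_n^{B-T}}$, completing the proof. I expect the main obstacle to be the coercivity step: identifying the support function $h$ and verifying that its strict domination over $\langle\bx^\star,\cdot\rangle$ is \emph{equivalent} to membership in the relative interior. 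This is exactly where the constant-shift degeneracy must be quotiented out cleanly, and where boundary targets genuinely fail (the minimizer escapes to infinity), explaining why only the closure, and not $\XX_n^{B-T}$ itself, equals $\XX_n$.
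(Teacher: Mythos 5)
Your proposal is correct, and it takes a genuinely different route from the paper's. The paper works in the \emph{primal} space of win-probability matrices: it minimizes $\sum_{i\ne j}p_{ij}\log p_{ij}$ over matrices with prescribed margins $\bx$ (whose nonemptiness rests on Moon's theorem), and the delicate point is that the minimizer might sit on the boundary; this is excluded by Joe's SST lemma together with an explicit perturbation argument, after which Lagrange stationarity yields the Bradley--Terry form. You work instead in the \emph{dual} (parameter) space: you recognize Bradley--Terry as an exponential family with sufficient statistic $\bs(T)$ and minimize $F(\lambda)=\log Z(\lambda)-\langle\bx^\star,\lambda\rangle$, where the delicate point is escape of the minimizer to infinity, handled by your support-function coercivity argument. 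These are in fact Lagrangian duals of one another (the convex conjugate of $x\log x+(1-x)\log(1-x)$ is $\log(1+e^y)$), so the boundary-attainment issue the paper fights with SST reappears for you as non-coercivity exactly at boundary targets $\bx^\star$ --- which is your (correct) explanation of why only the closure equals $\XX_n$. What your route buys: it is self-contained --- no SST lemma and no appeal to Moon's Theorem~\ref{T2}, since $\XX_n=\mathrm{conv}\{\bs(T):T\}$ follows from linearity of expectation and mixtures --- and it is the standard existence argument for exponential-family maximum likelihood. What the paper's route buys: structural information about the optimizer (the max-entropy matrix with given margins is Bradley--Terry and strongly stochastically transitive), which connects Theorem~\ref{T1} to Joe's results and to the paper's max-entropy theme. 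Two small points would make your write-up airtight: (1) note that the affine hull of $\XX_n$ is the full hyperplane $\{\sum_i x_i=\binom{n}{2}\}$ (e.g.\ because every permutation of $(0,1,\ldots,n-1)$ is a score vector), which is what licenses the implication ``$\bx^\star\in\mathrm{relint}(\XX_n)$ $\Rightarrow$ $\langle\bx^\star,v\rangle<h(v)$ for all $v\ne0$ with $\sum_i v_i=0$''; and (2) coercivity can be made limit-free via the pointwise bound $\log(e^a+e^b)\ge\max(a,b)$, which gives $F(\lambda)\ge h(\lambda)-\langle\bx^\star,\lambda\rangle\ge\delta\|\lambda\|$ on the subspace, with $\delta>0$ obtained by minimizing the positively homogeneous right-hand side over the unit sphere.
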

We need {\em closure} to get the extreme cases of the mean score sequence 
 where for some $k$,  $x_i=i-1$
for all $i\ge k$. 

In fact Theorem 1 is not explicitly stated and proved in \cite{joe88}, but it 
is a straightforward consequence of results there.
We give both a simple heuristic argument and a careful proof in section \ref{sec:BT}.

As background to the second theorem, recall that the following definition and representation 
(see e.g.\ \cite{MR1278322} section 2.A) 
of {\em convex order} $\preceq$ are useful 
in several areas of probability.

\noindent
{\bf Definition.} For finite mean probability measures on $\Reals$, 
$\mu \preceq \nu$ means 
\begin{equation}
\int \phi d \mu \le   \int \phi d \nu \ \mbox{ for all convex } \phi  \mbox{ such that the integrals exist.} 
 \label{phiY}
 \end{equation}
 
 \noindent
 {\bf Representation (Strassen~\cite{strassen65}). } $\mu \preceq \nu$ if and only if
there exists a joint distribution  $(X,Y)$ with marginal distributions $\mu$ and $\nu$
such that
\begin{equation}
 X = \Ex(Y | X) .
 \label{YZ}
 \end{equation}
 In the special case where $\mu$ and $\nu$ are uniform on $n$-element multisets $\bx$ and $\by$, written by convention in increasing\footnote{{\em Increasing} means non-decreasing.} order
 as $\bx = (x_1, \ldots,x_n)$ and $\by = (y_1,\ldots, y_n)$, the convex order  $\mu \preceq \nu$ is equivalent to the notion 
 that {\em $\bx$ is majorized by $\by$},  also written as $ \bx \preceq \by$, and usually defined by
 \begin{equation}
 \sum_{i=1}^k x_i \ge  \sum_{i=1}^k y_i, \ 1 \le k \le n, \mbox{ with equality for } k = n .
 \label{major}
 \end{equation}  
 The breadth of utility of this notion of majorization, mostly outside probability, is demonstrated in the book \cite{MOA}.
The relatioin $\bx\preceq\by$
can be thought of intuitively as ``$\bx$ is less spread 
out than $\by$'', as discussed in \cite{MOA}.
 
 The second theorem is our loose analog of the characterization of covariance matrices as symmetric positive semi-definite matrices.
 \begin{Theorem}[Moon \cite{moon_thm}]
\label{T2}
 An increasing sequence of real numbers
$\bx = (x_1, x_2, \ldots, x_n)$ is a mean score sequence of some random tournament  if and only if 
$\bx \preceq (0,1,\ldots,n-1)$.
\end{Theorem}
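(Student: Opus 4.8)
The plan is to prove the two inclusions separately. Write $s_i(T)$ for the number of wins of team $i$ in a tournament $T$, and for a distribution $\mu$ on tournaments set $x_i = \Ex_\mu s_i(T)$.

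For the \emph{only if} direction I would argue pointwise before taking expectations. Fix any set $S$ of $k$ teams. Deterministically, the teams in $S$ collectively win at least the $\binom{k}{2}$ games they play among themselves, so $\sum_{i\in S} s_i(T) \ge \binom{k}{2}$ for every $T$; taking expectations gives $\sum_{i\in S} x_i \ge \binom{k}{2}$. Likewise $\sum_{i=1}^n s_i(T) = \binom{n}{2}$ for every $T$, so $\sum_i x_i = \binom{n}{2}$. Choosing $S$ to be the $k$ teams of smallest expected score, and recalling $\sum_{i=1}^k (i-1) = \binom{k}{2}$, yields $\sum_{i=1}^k x_i \ge \sum_{i=1}^k (i-1)$ with equality at $k=n$; these are exactly the Landau-type inequalities defining $\bx \preceq (0,1,\ldots,n-1)$.

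For the \emph{if} direction I would exploit convexity. Let $\widetilde{\XX}_n \subseteq \Reals^n$ be the set of \emph{labelled} expected-score vectors $(\Ex_\mu s_i)_i$ ranging over all distributions $\mu$. Since $\mu \mapsto (\Ex_\mu s_i)_i$ is affine and any mixture $\alpha\mu_1 + (1-\alpha)\mu_2$ is again a distribution on tournaments, $\widetilde{\XX}_n$ is convex, and it is invariant under permuting coordinates (relabel the teams). The transitive tournament in which a ranking $\sigma$ prevails produces the labelled score vector $\sigma(0,1,\ldots,n-1)$, so all $n!$ permutations of $(0,1,\ldots,n-1)$ lie in $\widetilde{\XX}_n$. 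By convexity $\widetilde{\XX}_n$ contains their convex hull, which by Rado's theorem (equivalently Hardy--Littlewood--P\'olya together with Birkhoff--von Neumann) is precisely $\{\bx : \bx \preceq (0,1,\ldots,n-1)\}$. Hence any sorted $\bx$ with $\bx \preceq (0,\ldots,n-1)$ lies in $\widetilde{\XX}_n$, and coordinate-permutation invariance lets us realize the sorted vector itself, so $\bx \in \XX_n$.

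This already settles the theorem, but the real point—and the step I expect to be the main obstacle—is making the \emph{if} direction genuinely constructive, producing an explicit $\mu$ rather than merely invoking the existence of a doubly stochastic matrix. Concretely one must exhibit weights with $\bx = \sum_\sigma \alpha_\sigma\, \sigma(0,1,\ldots,n-1)$, $\alpha_\sigma \ge 0$, $\sum_\sigma \alpha_\sigma = 1$, and then let $\mu$ select the ranking $\sigma$ with probability $\alpha_\sigma$ and play the corresponding transitive tournament; linearity of expectation returns $\bx$. I would aim to give one explicit recursive scheme—sorting $\bx$ and peeling off transitive pieces one vertex at a time in Birkhoff fashion—and argue that it terminates. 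This is exactly where the difficulty flagged in the introduction surfaces: as with the Strassen coupling for convex order, the mixing measure is far from unique and admits no canonical choice, so the genuine work lies in selecting and justifying a concrete terminating construction.
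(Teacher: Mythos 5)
Your proof is correct, and it is worth comparing the two halves separately. For the \emph{only if} direction you take a genuinely different route: you establish the partial-sum form (\ref{major}) directly, noting that every game among a set $S$ of $k$ teams is won inside $S$, so $\sum_{i\in S} x_i \ge \binom{k}{2}$ pointwise and hence in expectation -- the inequalities being linear, no convexity argument is needed at all. The paper instead verifies the convex-function definition (\ref{phiY}) by ``Robin Hood'' moves on deterministic outcomes followed by Jensen's inequality. Your version is shorter and more transparent, though it proves majorization in the form (\ref{major}) and so leans on the equivalence of (\ref{major}) with the convex-order definition, which the paper states as known. For the \emph{if} direction your construction -- a random transitive tournament, i.e.\ a mixture over permutations of the totally ordered outcome -- is exactly the object produced by the paper's second proof; the difference is which theorem supplies the mixing weights. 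The paper builds a coupling via Strassen's representation (\ref{YZ}) and then applies Birkhoff's theorem to the doubly stochastic matrix $n\Pr(I_n=i,\,J_n=j)$, whereas you invoke convexity and permutation-invariance of the labelled score set together with Rado's theorem that the convex hull of the permutations of $(0,1,\ldots,n-1)$ is the majorization polytope. Since Rado's theorem is itself standardly proved by Hardy--Littlewood--P\'olya plus Birkhoff, the underlying mathematics coincides and the distinction is packaging; what the paper's route buys is its stated aim of making the Strassen coupling explicit, and its separate ``football'' proof gives a second, independent-games construction which, combined with constructive proofs of (\ref{rep2}), becomes fully constructive. Your closing paragraph rightly identifies explicit constructiveness as the interesting difficulty, but since the theorem asserts only existence, the Rado argument already settles it, and the unfinished Birkhoff-peeling sketch is commentary rather than a gap.
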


We give proofs of these theorems in sections \ref{sec:proofs} and \ref{sec:BT}, and then discuss previous proofs and the broader context in section \ref{sec:discuss}. 
From that broader context, we feel that the most interesting part of these results 
 is how should one  prove the ``if" part of Theorem~\ref{T2}, so we start by giving two proofs of that.

\section{Proofs of Theorem \ref{T2}}
\label{sec:proofs}

\subsection{First proof of Theorem~\ref{T2}  (``if" part).} 
\label{sec:football}
Given $\bx \preceq (0,1,\ldots,n-1)$, the representation (\ref{YZ}) 
holds with $X$ uniform on $\{x_1, \ldots,x_n\}$ and $Y$ uniform on $(0,1,\ldots,n-1)$, and then 
$\mu_i(\cdot) := \Pr(\cdot | X=x_i)$ define
probability distributions  $\mu_i(\cdot), \ 1 \le i \le n$, on integers 
 $\{0,1,\ldots, n-1\}$ such that\\
 (i) $\mu_i(\cdot)$ has mean $x_i$\\
 (ii) $\sum_i \mu_i(j) = 1$, all $0 \le j \le n-1$.\\

\noindent
The proof is easily understood in terms of a model for football\footnote{Soccer (U.S.)} -- 
note this is separate from the initial ``tournament" model.
\begin{quote}
When teams $i$ and $j$ play, their goal scores are independent with distributions
 $\mu_i$ and $\mu_j$. Define 
 $p_{ij}$ as the mean number of points earned by $i$, when a win earns $1$ point and a tie earns $1/2$ point.
\end{quote}
To complete the proof we need only check\\
(iii) $\sum_{j \ne i} p_{ij} = x_i, \ 1 \le i \le n$, \\
in other words check 
\begin{equation}
\sum_{j \ne i} \chi(\mu_i,\mu_j) = x_i, \ 1 \le i \le n
\label{check}
\end{equation}
where for independent random variables $X$ and $\hat{X}$ with distributions $\nu$ and $\hat{\nu}$ 
we define 
\[ \chi(\nu, \hat{\nu}) = \Pr( X > \hat{X}) +  \sfrac{1}{2} \Pr( X = \hat{X}) . \]
Note that $\chi$ is linear in each argument and that, for the uniform distribution $\lambda$ on  $\{0,1,\ldots, n-1\}$ 
we have
$\chi(\delta_k, \lambda) = (k +  \sfrac{1}{2})/n$.
So by linearity and (i), $\chi(\mu_i, \lambda) = (x_i +  \sfrac{1}{2})/n$.
By (ii) we may write $\lambda = \frac{1}{n} (\mu_i + \sum_{j \ne i} \mu_j)$ and then by linearity
\[ x_i +  \sfrac{1}{2} = \chi(\mu_i,\mu_i) + \sum_{j \ne i} \chi(\mu_i,\mu_j) . \]
But $ \chi(\mu_i,\mu_i) =  \sfrac{1}{2}$ by symmetry, verifying (\ref{check}).

\subsection{Second proof of Theorem~\ref{T2}  (``if" part).} 
A permutation $\pi: \{1,2,\ldots,n\} \to \{1,2,\ldots,n\}$  
defines a special kind of ``season results", in which team $i$ loses to team $j$ if and only if $\pi(i) < \pi(j)$, 
and therefore team $i$ wins exactly  $\pi(i) - 1$ games.
By considering this special type of ``totally ordered" season result, the converse will follow from the lemma below,
because the mean score sequence for the corresponding ``random total order" is 
$(q(i) - 1, 1 \le i \le n)$.

\begin{Lemma}
Let $q:  \{1,2,\ldots,n\} \to [1,n]$ be a function such that 
$q(1+U_n) \preceq 1+U_n$, where $U_n$ has uniform distribution on $\{0,1,\ldots,n-1\}$. 
Then there exists a probability distribution over permutations $\pi$ such that 
\[ \Ex \pi(i) = q(i), 1 \le i \le n . \]
\end{Lemma}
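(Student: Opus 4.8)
The plan is to recast the lemma as a statement about doubly stochastic matrices and then invoke two classical theorems. A probability distribution over permutations $\pi$ induces the matrix $P_{ij} = \Pr(\pi(i) = j)$, which is doubly stochastic, and conversely every doubly stochastic matrix arises this way. Writing $b = (1,2,\ldots,n)^T$, the target quantity is
\[
\Ex \pi(i) = \sum_{j=1}^n j \, \Pr(\pi(i) = j) = (Pb)_i ,
\]
so the lemma is equivalent to the claim that, whenever $q \preceq b$, there exists a doubly stochastic $P$ with $Pb = q$. Here I use that the hypothesis $q(1+U_n) \preceq 1+U_n$ is, by the equivalence of convex order and majorization recorded above (around (\ref{major})), exactly the vector majorization of the multiset $\{q(1),\ldots,q(n)\}$ by $\{1,\ldots,n\}$.

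Granting this reformulation, the lemma is immediate. The Hardy--Littlewood--P\'olya theorem (see \cite{MOA}) states that $q \preceq b$ holds if and only if $q = Pb$ for some doubly stochastic $P$, and the Birkhoff--von Neumann theorem then writes $P = \sum_\sigma w_\sigma M_\sigma$ as a convex combination of permutation matrices $M_\sigma$, with weights $w_\sigma \ge 0$ summing to $1$. Taking $\pi = \sigma$ with probability $w_\sigma$ gives the desired distribution, since $\Ex\pi(i) = \sum_\sigma w_\sigma \sigma(i) = (Pb)_i = q(i)$ by linearity.

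These two cited results do all the work, so there is no real obstacle if one is content to use them as black boxes; the only thing that needs care is the translation between the distributional hypothesis and the vector majorization $q \preceq b$. To match the paper's stated aim of a \emph{more constructive} argument, however, I would instead build $P$ by hand rather than appeal to Hardy--Littlewood--P\'olya abstractly. Since $q \preceq b$, one can pass from $b$ to $q$ by a finite chain of at most $n-1$ T-transforms (Robin Hood transfers), each moving mass between two unequal coordinates and each of the form $\lambda I + (1-\lambda) Q$ for a transposition matrix $Q$ and some $\lambda \in [0,1]$. The product of these factors is the required $P$, and since each factor is a convex combination of permutation matrices, expanding the product exhibits $P$ explicitly as a mixture of permutations. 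The main work along this route is the bookkeeping: verifying that the chain of transfers stays within the majorization region, terminates, and composes into a genuine probability distribution over permutations. The concluding identity $\Ex\pi(i) = q(i)$ then remains routine by linearity.
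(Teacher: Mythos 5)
Your proof is correct and is essentially the paper's own argument: both reduce the lemma to producing a doubly stochastic matrix $P$ with $Pb = q$, $b = (1,2,\ldots,n)^T$, and then decompose $P$ into a mixture of permutation matrices via Birkhoff's theorem. The only real difference is which name you attach to the first step --- you invoke Hardy--Littlewood--P\'olya for the existence of $P$, while the paper extracts $P$ as $p_{ij} = n\Pr(I_n = i,\, J_n = j)$ from a Strassen coupling satisfying $\Ex(J_n \mid I_n = i) = q(i)$; these are exactly the equivalent formulations (\ref{YZ}) and (\ref{rep2}) that the paper itself identifies, so the substance is the same.
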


\begin{proof}
\noindent Write $I_n$ and $J_n$ for random variables with the uniform distribution on $\{1,2,\ldots,n\}$. 
The relation $q(I_n) \preceq J_n$ is equivalent, using the representation (\ref{YZ}), to saying that we can construct a joint 
distribution for $(I_n, J_n)$  such that 
\[ \Ex (J_n | I_n = i) = q(i), \ 1 \le i \le n . \]
Now the matrix with entries
\[ p_{ij}:= n \Pr(I_n = i, J_n = j) \]
is doubly stochastic, so by Birkhoff's theorem it is a mixture of permutation matrices.  
In other words, there is a probability distribution over permutations $\pi$ such that 
$p_{ij} = \Pr(\pi(i) = j)$.
But this just says 
\[ \Pr(\pi(i) = j) = \Pr(J_n = j | I_n = i) \]
and so 
\[ \Ex \pi(i) = \Ex (J_n | I_n = i) = q(i).  \qedhere \]
\end{proof}

\subsection{Proof of Theorem~\ref{T2}  (``only if" part).} 
First consider the deterministic case, so there is an integer-valued sequence $0 \le x^*_1\le x^*_2 \le \cdots \le x^*_n$ of wins.
Fix a convex function $\phi$. 

Suppose $x^*_1  \ge 1$.  Change the results of the games won by team $1$, one game at a time, to make them a loss for team $1$.
At each such step $x^*_1$ decreases by $1$ and some $x^*_i$ increases by $1$, and by convexity the value of 
$\sum_i  \phi(x^*_i)$ can only increase.  
Continue until reaching a configuration with $x^*_1 = 0$.  Now suppose $x^*_2 \ge 2$. 
Again change the results of the games won by team $2$ against teams $i > 2$, one game at a time, to make them a loss for team $2$. 
Again this can only increase $\sum_i  \phi(x^*_i)$.  
Continue until reaching a configuration with $x^*_1  = 0$ and $x^*_2 = 1$.
Eventually we reach the configuration with 
$x^*_i = i-1, 1 \le i \le n$.  So the original configuration satisfies $\sum_i  \phi(x^*_i) \le \sum_i  \phi(i-1)$, establishing (\ref{phiY}) in the 
deterministic case.

In the random case write $X_i$ for the random number of wins by team $i$.  So $x_i = \Ex X_i$, and by Jensen's inequality
$\phi(x_i) \le \Ex \phi(X_i)$.  So 
\[  \sum_i \phi(x_i) \le \sum_i \Ex \phi(X_i) = \Ex \left[ \sum_i \phi(X_i) \right] \] 
and the quantity in brackets is bounded by $ \sum_i  \phi(i-1)$, by the deterministic case.
This establishes (\ref{phiY}) in general.

\section{The Bradley--Terry model}
\label{sec:BT}


\subsection{The max-entropy heuristic}
There is a one sentence explanation of Theorem \ref{T1}. 
\begin{quote}
If $\bx$ is a mean score sequence then by definition there exists a matrix with non-negative off-diagonal entries $p_{ij} = 1 - p_{ji}$ satisfying
the constraints
$\sum_{j \ne i} p_{ij} = x_i, 1 \le i \le n$; and the max-entropy  such matrix is of the Bradley--Terry form (\ref{BT-def}).
\end{quote}
To elaborate, the problem
\begin{quote}
maximize $- \sum_{i\ne j} p_{ij} \log p_{ij} $ subject to $\sum_{j \ne i} p_{ij} = x_i, 1 \le i \le n$
\end{quote}
is solved, in classical applied mathematics, by introducing Lagrange multipliers $\lambda_i, 1 \le i \le n$ 
and solving
{\small 
\begin{quote}
maximize $- \sum_{i\ne j} p_{ij} \log p_{ij} + \sum_i \lambda_i ( \sum_{j \ne i} p_{ij} - x_i)$ over $\lambda_i, 1 \le i \le n$.
\end{quote}
}
\noindent
By setting $\frac{d}{dp_{ij}} ( \cdot) = 0$ the solution satisfies
\[ - \log p_{ij} +  \log (1- p_{ij}) + \lambda_i - \lambda_j = 0 \]
implying that the matrix $(p_{ij})$ is indeed of the Bradley--Terry form (\ref{BT-def}).

We learned this max-entropy argument from Joe \cite{joe88}.
How much detail needs to be added to make a completely rigorous proof is a matter of taste; 
we give a rather fussy argument next.

\subsection{Strong stochastic transitivity}
In  statistical modeling contexts such as \cite{aditya} one  says that $p_{ij}$ on tournaments of size $n$ has the 
{\em strong stochastic transitivity}
(SST) property if (after relabelling $[n]$ if necessary)   
\begin{equation}
\mbox{$p_{ij}$ is increasing 
in $i$, for any fixed $j$.}
\label{SST-1}
\end{equation} 
\begin{Lemma}[{Joe~\cite{joe88} Theorems~2.3 and 2.7}]\label{L_joe}
Let $\bx \preceq (0,1,\ldots,n-1)$. 
Suppose that 
$\psi$ is strictly convex. If $p_{ij}^*$ minimizes $\sum_{i\neq j}\psi(p_{ij})$
over $p_{ij}$ on tournaments of size $n$ 
with mean score sequences $\bx$, then 
$p_{ij}^*$ has SST. 
\end{Lemma}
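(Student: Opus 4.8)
The plan is to reduce to the symmetric weight $g(p) := \psi(p) + \psi(1-p)$ and then argue by an explicit exchange (rearrangement) performed on the minimizer. Pairing each unordered $\{i,j\}$ gives $\psi(p_{ij}) + \psi(p_{ji}) = \psi(p_{ij}) + \psi(1-p_{ij}) = g(p_{ij})$, so minimizing $\sum_{i\ne j}\psi(p_{ij})$ is the same as minimizing $\sum_{i<j} g(p_{ij})$ over $p_{ij}\in[0,1]$ subject to the row-sum constraints $\sum_{j\ne i}p_{ij}=x_i$ (with $p_{ji}=1-p_{ij}$). Here $g$ inherits strict convexity from $\psi$, is symmetric about $1/2$, and hence has strictly increasing slope that is negative on $(0,1/2)$ and positive on $(1/2,1)$. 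Since $\bx \preceq (0,1,\ldots,n-1)$, Theorem~\ref{T2} makes the feasible set nonempty; it is also compact, so a minimizer $p^*$ exists (strict convexity even makes it unique, though only minimality is used). Relabel so that $x_1\le\cdots\le x_n$. Then SST is exactly the assertion that the rows of $p^*$ are comonotone, i.e.\ $x_a\le x_b \Rightarrow p^*_{aj}\le p^*_{bj}$ for every $j\notin\{a,b\}$.

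I would prove comonotonicity by contradiction: suppose $x_a\le x_b$ yet $p^*_{aj}>p^*_{bj}$ for some $j$. The idea is to perturb $p^*$ by a parameter $t>0$ that preserves antisymmetry and \emph{all} row sums, stays feasible for small $t$, and strictly lowers the objective. There are two cases. If some fourth team $k$ exhibits a reverse discrepancy $p^*_{ak}<p^*_{bk}$, use the four-cycle $p_{aj}\mapsto p_{aj}-t$, $p_{ak}\mapsto p_{ak}+t$, $p_{bj}\mapsto p_{bj}+t$, $p_{bk}\mapsto p_{bk}-t$ (the complementary entries adjust automatically); a direct check shows rows $a,b,j,k$ are all unchanged. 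If instead $p^*_{am}\ge p^*_{bm}$ for every $m\ne a,b$, then comparing $x_a$ with $x_b$ forces $p^*_{ab}<1/2$, and I use the triangle rotation $p_{ab}\mapsto p_{ab}+t$, $p_{aj}\mapsto p_{aj}-t$, $p_{bj}\mapsto p_{bj}+t$, which likewise leaves every row sum fixed.

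In both cases the discrepancies are \emph{strict}, which leaves room to move ($p^*_{aj}>0$, $p^*_{bj}<1$, and so on), so the perturbation is feasible for small $t>0$; and the derivative of the objective at $t=0$ is respectively $[g'(p^*_{bj})-g'(p^*_{aj})]+[g'(p^*_{ak})-g'(p^*_{bk})]$ and $g'(p^*_{ab})+[g'(p^*_{bj})-g'(p^*_{aj})]$, each a sum of strictly negative terms (using that $g'$ is increasing and that $g'<0$ below $1/2$). Thus the objective strictly decreases, contradicting minimality, and the rows must be comonotone, giving SST under the score relabelling. I expect the one genuine technical point to be that $\psi$, hence $g$, need not be differentiable: I would phrase the ``derivative'' step through the one-sided derivatives of the convex function $t\mapsto\sum_{i<j}g(p_{ij}(t))$, whose strict negativity at $0^+$ (driven by strict convexity of $g$) is what actually supplies the strict decrease. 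Verifying the box-constraint feasibility and confirming that the score order is the SST order are then routine; this exchange argument is also the rigorous counterpart of the max-entropy computation sketched above, which predicts precisely this monotone structure.
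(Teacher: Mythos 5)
Your argument is correct, but there is an important point of comparison to make first: the paper itself gives \emph{no} proof of Lemma~\ref{L_joe} --- it is imported wholesale from Joe~\cite{joe88} (Theorems~2.3 and~2.7 there), and the paper only proves, in its Appendix, the equivalence of the two SST definitions \eqref{SST-1} and \eqref{E_SST}. So your proposal supplies a self-contained proof of something the paper treats as a black box. The proof itself checks out: the reduction to $g(p)=\psi(p)+\psi(1-p)$ (strictly convex, symmetric about $\sfrac{1}{2}$, hence with strict minimum there) is valid; the four-cycle exchange in the first case and the triangle rotation through the pair $\{a,b\}$ in the second case do preserve antisymmetry and all row sums; the strictness of the discrepancies gives room to move within the box constraints; and the one-sided derivative of the objective at $t=0^+$ is strictly negative in both cases by strict convexity (in the second case, the score comparison $x_a\le x_b$ together with the one-sided dominance of row $a$ does force $p^*_{ab}<\sfrac12$, hence $g'_+(p^*_{ab})<0$). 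Notably, your triangle rotation is the very same device the paper uses in its proof of Theorem~\ref{T1} (decreasing $p^*_{uv}$ by $\eps$ while increasing $p^*_{u,v+1}$ and $p^*_{v+1,v}$ by $\eps$), so your argument is in the paper's own spirit, just applied one level earlier.

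One inaccuracy worth flagging, though it does not damage the proof: SST as defined in \eqref{SST-1} asserts the existence of \emph{some} relabelling making the rows monotone, and this is \emph{not} exactly equivalent to comonotonicity with respect to the score order. For instance, with $n=3$ and $p_{12}=0.9$, $p_{31}=0.95$, $p_{32}=0.9$, the identity labelling satisfies \eqref{SST-1} while the scores are $(0.95,\,0.2,\,1.85)$, so the SST order differs from the score order and the rows fail comonotonicity in the score order. What you actually prove --- that the minimizer is comonotone in the score order, which in turn implies SST --- is the correct (and strictly stronger) direction; it is also the form silently needed in the paper's proof of Theorem~\ref{T1}, where SST is combined with the assumption that $\bx$ is increasing in the original labelling. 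A last peripheral remark: your existence claim for the minimizer needs $\psi$ lower semicontinuous at the endpoints of $[0,1]$ (a convex function may jump up there), but since the lemma takes the existence of $p^*_{ij}$ as a hypothesis, nothing in your argument depends on it.
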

We note that in \cite{joe88} and earlier combinatorial literature the SST property is instead
defined to mean 
\begin{equation}\label{E_SST}
\min\{p_{ij},p_{jk}\}\ge1/2\Rightarrow
p_{ik}\ge \max\{p_{ij},p_{jk}\}, 
\end{equation}
It is apparently well known that the two definitions are equivalent but we cannot find a published proof
(it does not appear in the old survey  \cite{fishburn}) so we have included a proof in the Appendix. 




\begin{proof}[Proof of Theorem~\ref{T1}]
We show that if ${\bf x}\prec (0,1,\ldots,n-1)$, in the sense that 
$\sum_{i=1}^k x_i > {k\choose 2}$ for $1\le k<n$ and 
$\sum_{i=1}^n x_i = {n\choose 2}$, then $p_{ij}=L(\lambda_i-\lambda_j)$
for some $(\lambda_i,1\le i\le n)$. The theorem follows. 

To this end, consider 
minimizing $\sum_{i\neq j} p_{ij}\log p_{ij}$, subject
to all $p_{ij}\le 1$, 
$p_{ij}+p_{ji}=1$  
 and $x_i=\sum_{j\neq i}p_{ij}$.  
Since ${\bf x}\preceq(0,1,\ldots,n-1)$, 
a solution $p^*_{ij}$ exists. 
We claim that all $p^*_{ij}\in(0,1)$. This concludes the proof, noting that  
the stationary point of
\[
\sum_{i\neq j} p_{ij}\log p_{ij}
+\sum_i \lambda_i(x_i-\sum_{j\neq i}p_{ij})
\]
corresponds to some $p^*_{ij}=L(\lambda^*_i-\lambda^*_j)$ 
with all  
$x_i=\sum_{j\neq i}L(\lambda^*_i-\lambda^*_j)$.  

To establish the claim, suppose  
towards a contradiction that some $p^*_{ij}=1$. 
Since $\psi(x)=x\log x$ is strictly convex, 
$p^*_{ij}$ has SST
by  
Lemma~\ref{L_joe}. 
Let $u$ be the minimal index such that 
$p^*_{uj}=1$ for some $j$. Let 
$v$ the maximal index such that 
$p^*_{uv}=1$. Note that since $x\prec(0,1,\ldots n-1)$  
we have $v<u-1$, as else, since $p^*_{ij}$ has SST,
we would find that 
$p^*_{ij}=0$ for all $i\le u-1$ and $j\ge u$, 
and so $\sum_{i=1}^{u-1}x_i={u-1\choose 2}$.

Next, 
consider $q_{ij}$ obtained from $p^*_{ij}$ 
by decreasing $p^*_{uv}=1$ by $\eps$ and 
increasing $p^*_{u,v+1}$ and $p^*_{v+1,v}$
by $\eps$.
That is, let\\
(i) $q_{vu}=1-q_{uv}=\eps$, \\
(ii) $q_{u,v+1}=1-q_{v+1,u}=p^*_{u,v+1}+\eps$,\\
(iii) $q_{v+1,v}=1-q_{v,v+1}=p^*_{v+1,v}+\eps$,\\
and $q_{ij}=p^*_{ij}$ for all other $i,j$. 
Since $v<u-1$, and so $v+1\neq u$,  
all 
\[
\sum_{j\neq i} q_{ij}=\sum_{j\neq i} p^*_{ij}=x_i.
\]
Moreover, by the choice of $u$ and $v$, and since $v+1<u$, note that
$p^*_{u,v+1}<1$ and 
$p^*_{v+1,v}<1$. 
Therefore all $q_{ij}\le1$, for all small $\eps>0$. 
Hence $q_{ij}$ is a distribution with mean score
sequence ${\bf x}$. 

Finally, observe that,  for $\alpha, \beta \in (0,1)$, differentiating 
\[
\ell(\eps)+\ell(\alpha+\eps)+\ell(\beta+\eps)
\]
with respect to $\eps$, 
where $\ell(x)=x\log{x}+(1-x)\log(1-x)$, 
we obtain 
\[
\log\left(
\frac{\eps}{1-\eps}
\frac{\alpha+\eps}{1-\alpha-\eps}
\frac{\beta+\eps}{1-\beta-\eps}
\right) <0
\]
for all small $\eps>0$. 
It follows that 
\[
\sum_{i\neq j} q_{ij}\log q_{ij}
<
\sum_{i\neq j} p^*_{ij}\log p^*_{ij}
\]
for all small $\eps>0$, 
contradicting the minimality of $p^*_{ij}$. 
\end{proof}

\subsection{Comments on max-entropy}
\label{sec:M-E}
The section~\ref{sec:analogy} analogy, that  (a) is the closure of the set defined by (b), can also be proved by max-entropy instead of the multivariate CLT.
And in the context of this paper, 
we could call $p_{ij}$ {\em Cauchy} if $p_{ij}=C(\alpha_i-\alpha_j)$,
where $C(x)=2^{-1}+\pi^{-1}\arctan{x}$ is the CDF of a standard Cauchy. 
The same argument  above works, replacing the entropy function $u\log u$ by the function  $-(2\pi)^{-1}\log\sin(u\pi)$, to show that the set of mean score sequences from Cauchy matrices is dense in the set of all mean score sequences.


\section{Discussion}
\label{sec:discuss}
One purpose of this paper is simply to juxtapose Theorems \ref{T1} and \ref{T2}, which come from rather different research communities.
As noted below, Moon's theorem has many proofs using different textbook theorems -- 
and indeed could be used as a running example in a first undergraduate course in discrete mathematics.
As probabilists we wanted to find ``probabilistic" arguments, and our two ``if" proofs in section \ref{sec:proofs} are (we believe) new and probabilistic.
In particular the ``football story" in section \ref{sec:football} seems memorable. 
The relevant discrete mathematics literature starts with 
Landau's theorem  \cite{landau}, the deterministic analog of Moon's theorem  characterizing score sequences of non-random tournament outcomes.
Moon's original proof \cite{moon_thm}, and  a more general version (their Theorem 3.9) in
Moon and Pullman \cite{pullman},
used network flow feasibility properties, and later
 Bang and Sharp \cite{bang} used  Hall's theorem on systems of distinct representatives, 
Cruse \cite{cruse} used linear programming methods, 
and Thornblad  \cite{thornblad} derived it from the deterministic case \cite{landau} via a rather lengthy argument.  
To us it is more natural to explicitly exploit the Strassen representation, as our two ``if" proofs do in different ways. 
The ``only if'' part could alternatively  be proved as an easy consequence of Landau's theorem and \cite{MOA} Proposition~12.D.1.
Our proof (by ``Robin Hood moves''  \cite{MOA}) 
avoids Landau's theorem.


As mentioned before, convex order and the general Strassen representation are useful tools in several areas of probability theory.
Because an arbitrary distribution on $\Reals$ can be approximated by uniform distributions on $n$-element sets, 
the general case is conceptually very similar to the majorization case $\bx \preceq \by$ defined at (\ref{major}).
This setting is treated at great length in the  book \cite{MOA}.
Instead of the  ``coupling of random variables" picture natural to modern probabilists,
the representation is stated there in the equivalent form  
\begin{equation}
 \mbox{ if }    \bx \preceq \by \mbox{ then }    \bx = A \by   \mbox{ for some doubly stochastic matrix $A$}.
 \label{rep2}
 \end{equation}
 There are constructive proofs of this (e.g. \cite{MOA} Theorem 2.B.2).
 Combined with our ``football" proof we obtain a completely constructive proof of Moon's theorem. 
 To our knowledge, previous proofs cannot be made constructive so easily.

To us,  the most interesting part of the bigger picture surrounding convex order is that there is apparently no ``canonical"  choice of joint distribution in (\ref{YZ}, \ref{rep2}):
proofs may be constructive but they involve rather arbitrary  choices and the resulting joint distributions are not easily described.
Recent literature on {\em peacocks}  \cite{peacock}
studies continuous-parameter processes increasing in convex order, via many different constructions, 
and ideas from that literature might be relevant in our context.
 

We encountered this field while exploring the Bradley--Terry model as a basic mathematical toy model for sports results -- see  \cite{me-elo} and   \cite{kiraly} 
for references to the extensive literature in that field.
Abstractly there is a map $G$ from the set of 
$ - \infty < \lambda_1 \le \lambda_2 \le \cdots \le \lambda_n < \infty$, centered by requiring $\sum_i \lambda_i = 0$, to $\bx \in \Reals^n$
defined by
\[ x_i = \sum_{j \ne i} L(\lambda_i - \lambda_j). \]
This map has some range  $\XX_n^{B-T}$,
but it is hard to see directly from that definition what is the range of $G$.  
Theorem~\ref{T1} answers that question.
However the inverse function $G^{-1}$ giving $(\lambda_1,\ldots,\lambda_n)$ in terms of $\bx \in \XX_n$ 
remains obscure; we do not have an explicit formula.
So we cannot say anything about how the win-probabilities $p_{ij} = L(\lambda_i - \lambda_j)$ depend on $\bx$.
In particular, we do not know whether these  win-probabilities $p_{ij} $ can always be obtained within the ``football model" by some choice of 
distributions $\mu_i$ (equivalently of matrix $A$ at (\ref{rep2})).


\appendix
\section{Appendix: Proof of equivalence of the two definitions of SST}
Suppose that $p_{ij}$ has SST as defined by (\ref{SST-1}).
Then  $p_{ij}\ge1/2$ 
(for which necessarily $i\ge j$)
are increasing in $i$ for any $j$,  
and decreasing in $j$ for any $i$, giving 
\eqref{E_SST}. 

Conversely, suppose we have 
\eqref{E_SST}. 
Consider the graph $G$ with directed edges $j\to i$ for $p_{ij}\ge1/2$. 
By induction, all induced $H\subset G$
have a vertex $i_*$ such that $j\to i_*$ for all $j\neq i_*\in H$.
Indeed, suppose this holds for all such subgraphs of size $m<n$.
Let $H$ be an induced subgraph of size $m+1$, and consider $H'=H-v$
for some $v\in H$. Let $i_*'$ be such that $j\to i_*'$ for all $j\neq i_*'\in H'$. 
If $v\to i_*'$ in $H$ then let $i_*=i_*'$. 
Otherwise, if $v\not\to i_*'$ (and so $i_*'\to v$) in $H$, then 
\eqref{E_SST} implies that 
all other $j\to v$ are in $H$ 
(as else, 
since $j\to i_*'$, the presence of some $v\to j$  in $H$
and  \eqref{E_SST} would imply that 
$v\to i_*'$ in $H$, contrary to our assumption),  
and so let $i_*=v$.
This completes the induction.
The case $H=G$ gives $i_*$ such that $p_{i_*j}\ge1/2$ for all $j$. 
Therefore, we can recursively relabel $[n]$ so that $p_{ij} \ge 1/2$ for all pairs $i>j$.
Then assumption \eqref{E_SST} directly implies the
SST property in the sense of (\ref{SST-1}).

\end{document}